\DeclareMathOperator*{\argmin}{argmin}
\newtheorem{theorem}{{\bf Theorem}}
\newtheorem{corollary}{{\bf Corollary}}
\newtheorem{example}{\bf Example}
\newtheorem{definition}{{\bf Definition}}
\newtheorem{lemma}{\bf Lemma}
\newtheorem{assumption}{\bf Assumption}
\begin{document}
%

%

\title{Prediction in Online Convex Optimization for Parametrizable Objective Functions}
\author[1]{Robert Ravier}
\author[1]{Vahid Tarokh}
\affil[1]{Department of Electrical and Computer Engineering, Duke University}

\maketitle
\begin{abstract}
Many techniques for online optimization problems involve making decisions based solely on presently available information: fewer works take advantage of potential predictions. In this paper, we discuss the problem of online convex optimization for parametrizable objectives, i.e. optimization problems that depend solely on the value of a parameter at a given time. We introduce a new regularity for dynamic regret based on the accuracy of predicted values of the parameters and show that, under mild assumptions, accurate prediction can yield tighter bounds on dynamic regret. Inspired by recent advances on learning how to optimize, we also propose a novel algorithm to simultaneously predict and optimize for parametrizable objectives and study its performance using simulated and real data.
\end{abstract}

\section{Introduction}
\label{sec:Intro}

Online convex optimization (OCO) has received significant attention in recent years due to its wide range of applicability. These applications include ad selection \cite{hazan2016introduction}, video streaming \cite{joseph2012jointly}, power scheduling \cite{narayanaswamy2012online} among many others. We refer the reader to \cite{shalev2012online,hazan2016introduction} for a more rigorous introduction, but we briefly summarize below.

The typical OCO scenario can be modeled as the following game. At time $t-1,$ the player must pick a candidate point $x_{t}$ for belonging to some constraint set $\mathcal{X}.$ At time $t,$ the true convex loss function $f_t(\cdot)$ is revealed, and the player suffers a loss $f_{t}(x_{t}).$ This continues for a total of $T$ time steps. The general goal is to find an algorithm that performs well with respect to some notion of regret. Historically, much focus has been given towards the static regret, i.e. performance with respect to the optimal fixed point in hindsight:

\begin{equation}\label{eq:Static}
{\bf{Reg}}_{S}(\{x_{t}\}) := \sum_{t=1}^{T}f_{t}(x_{t}) - \min_{x^{\ast} \in \mathcal{X}} \left( \sum_{t=1}^{T}f_{t}(x^{\ast}) \right)
\end{equation}

where $\{x_{t} \}$ is the sequence of moves played. It has been well-established that algorithms can achieve sublinear \cite{zinkevich2003online} and even logarithmic regret under suitable assumptions \cite{hazan2007logarithmic}. The same cannot be said of the measure of regret of interest in our paper, dynamic regret (sometimes called {\emph{restricted dynamic regret}} \cite{zhang2018adaptive} or {\emph{tracking regret}} \cite{hall2013dynamical}). Dynamic regret measures the performance with respect to the optimal values of the function at each time, i.e.:

\begin{equation}\label{eq:Dynamic}
{\bf{Reg}}_{D}(\{x_{t}\}) := \sum_{t=1}^{T}\left( f_{t}(x_{t}) - \min_{x_{t}^{\ast} \in \mathcal{X}} f_{t}(x_{t}^{\ast}) \right)
\end{equation}

OCO algorithms have been more frequently analyzed by looking at their dynamic regret \cite{jadbabaie2015online,mokhtari2016online, yang2016tracking,zhang2017improved,li2018using}. The analysis of these algorithms does not focus on sublinear regret (as this is impossible to achieve in general \cite{yang2016tracking}) but rather focuses on bounding performance in terms of different regularities depending on the specific algorithm employed.

Somewhat surprisingly, few of the algorithms for OCO explicitly make use of predictions of future objective functions and gradients. Indeed, OCO has historically been viewed from an adversarial lens. This is perhaps too conservative for many scenarios. For example, in applications related to power allocation, frequently past information concerning usage is indicative of the future. Leveraging accurate predictions could better assist in many scenarios where online optimization techniques are utilized. However, the effect of accurate predictions of relevant data points on the performance of such online algorithms is generally not clear. Some algorithms give bounds on regret in terms of the accuracy of blackbox predictions of functions or gradients  \cite{jadbabaie2015online,mohri2016accelerating}, but it is not immediately obvious as to how one can get these from data. Other methods assume that accurate predictions are known throughout the duration of the scenario (e.g. \cite{li2018using}), or assume very particular structure of the resulting accuracy or potentially involve unrealistic assumptions such as fully optimizing a function at every point and time (e.g. \cite{chen2016using}). We would like to address these issues in our paper for a large class of objective functions.

To this end, we make the observation that in many situations, the specific {\emph{form}} of the objective function is known and fixed throughout time. More specifically:

\begin{definition} \label{def:Parametric}
Let $f(\cdot,\cdot): \mathbb{R}^{n} \times \mathbb{R}^{m} \to \mathbb{R}$ be a function that is convex in the first argument. An optimization problem is {\bf{parametric}} if it is of the form

$$\min_{x \in \mathcal{X}} f(x,\theta)$$

\noindent where $\mathcal{X} \subseteq \mathbb{R}^{n}$ is a closed convex set for some $\theta$ in a parameter space $\Theta \subset \mathbb{R}^{m}.$
\end{definition}

This is a general form of optimization problem present in predictive optimization problems \cite{chen2016using,ito2018unbiased}. Throughout the remainder of the paper, we assume that we are interested in OCO problems where the cost functions $f_{t}$ are of the form $f_{t}(x) = f(x,\theta_{t}).$ We note that many important objective functions of theoretical and practical interest are encompassed by this specific form.

\begin{example}\label{ex:FunTimeSeries}
Let $g_{1}(x),...,g_{m}(x)$ be convex functions with domain $\mathcal{X} \subseteq{R}^{n}$ and let $\Theta = \Delta^{m}$ be the standard unit $m$-simplex. Then

$$f(x,\theta_{t}) := \theta^{1}_{t}g_{1}(x) + \cdot \cdot \cdot + \theta^{m}_{t} g_{m}(x)$$

is a {\emph{functional time series}}.
\end{example}

\begin{example}\label{ex:Markowitz}
For a collection of assets $A,$ let $\mu_{t}$ and $\Sigma_{t}$ be their corresponding sample mean and covariance, and let $\lambda_{t} > 0.$ For $\mathcal{X} = \Delta^{|A|}$ the unit simplex on the number of assets $|A|,$ the Markowitz optimal portfolio \cite{markowitz1952portfolio} with respect to $[\mu_{t},\Sigma_{t},\lambda_{t}]$ is the $\argmin$ over $X$ of the following function

\begin{equation}\label{eqn:Markowitz}
f(x,[\mu_{t},\Sigma_{t},\lambda_{t}]) = x^{T}\Sigma_{t}x - \lambda_{t} x^{T} \mu_{t}
\end{equation}
\end{example}

For parametric optimization problems, prediction of objective functions and relevant quantities reduces to prediction of parameters, a much more well-studied though still difficult problem (e.g. \cite{brockwell2002introduction,box2015time}).

The main results of our paper are summarized as follows.

\begin{itemize}
\item{We show that, under mild regularity assumptions, gradient descent using predicted objectives for parametric optimization problems as defined in Definition \ref{def:Parametric} can improve the dynamic regret over standard online gradient descent provided sufficient accuracy in predicted values. The method of proof of our dynamic regret bounds is general enough to extend to cases of where a descent algorithm yields a contraction, i.e. we have some inequality of the form

\begin{equation}\label{eq:Contract}
||x_{t+1}-x_{t+1}^{\ast}|| \leq \rho ||x_{t}-x_{t+1}^{\ast}||
\end{equation}
for some $0 < \rho < 1.$}
\item{We provide a meta-learning algorithm called SMAD, inspired by recent innovations in learning how to optimize, that simultaneously learns the optimal parameter prediction process from a collection of models while performing descent.}

\end{itemize}

The remainder of the paper is summarized as follows. In Section \ref{sec:Prelim} we further detail preliminary details and assumptions needed for the remainder of the paper. In Section \ref{sec:Theo} we detail our theoretical results concerning the performance of predictive online gradient descent. In Section \ref{sec:SMAD} we detail our meta-learning algorithm for simultaneous modeling and descent. In Section \ref{sec:Numer} we give numerical simulations to backup our intutition and evaluate our algorithm's performance. In Section \ref{sec:Conc} we make concluding remarks.
\section{Preliminaries}\label{sec:Prelim}

\subsection{Regularities for Dynamic Regret}
As discussed earlier, dynamic regret bounds for algorithms focus on various regularities of the OCO problem of interest. These regularities generally focus not on algorithmic decisions but on properties of the elements of the problem outside of the algorithm's control. We briefly a number of these quantities. One of the more prevalent regularities, notably appearing in \cite{zinkevich2003online,mokhtari2016online} is the path length of the sequence of optimal points: if $x_{t}^{\ast} = \argmin_{x \in \mathcal{X}} f_{t}(x)$ for $f_{t}$ convex, then

$$\mathcal{P}^{\ast} := \sum_{t=1}^{T-1} ||x_{t}^{\ast}-x_{t+1}^{\ast}||$$

Other regularities of interest include the {\emph{squared}} path length introduced in \cite{zhang2017improved}, functional variation \cite{besbes2015non} and the gradient variation \cite{chiang2012online}, which are measurements that depend on the sup norm of the differences between the functions and their gradients between times $t-1$ and $t.$ 

More directly relevant to our discussion are what we term {\emph{prediction regularities}}. These are not regularities as above in the sense that they are within an algorithm's purview. Nevertheless, they have demonstrated importance for certain regret bounds. \cite{jadbabaie2015online} introduces a squared predictive gradient regularity

$$\sum_{t=1}^{T} ||M_{t}-\nabla_{x}f_{t}(x_{t})||^2$$

\noindent where $M_{t}$ is a prediction of $\nabla_{x}f_{t}(x_{t})$ prior to it becoming revealed. We will consider predictive regularities consistent with the parametric optimization framework we outlined earlier. For $f_{t}(x) = f(x,\theta_{t}),$ we consider the {\emph{parameter prediction regularity}}

$$P^{\theta} := \sum_{t=2}^{T} ||\theta_{t}-\hat{\theta_{t}}||$$

This quantity measures cumulative prediction error respect to the parameters (hence objective functions) and will be important to our subsequent analysis. One can also similarly introduced a squared parameter prediction regularity in a manner analogous to the squared path length, obtained by squaring the norms in the above term, but we do not pursue this. 

\subsection{Theoretical Assumptions}

We now detail the theoretical assumptions needed for the remainder of the paper. The first few assumptions are standard for dynamic regret analysis in OCO and can be seen in, for example, \cite{mokhtari2016online,zhang2017improved}. Recall that all functions we consider will be of the form $f_{t}(x) = f(x,\theta_{t})$  and that our closed, convex constraint set is given by $\mathcal{X}$ with corresponding projection $\Pi_{\mathcal{X}}$ and the parameter set is given by $\Theta.$

\begin{assumption}\label{assum:Grad}
The function $f(x,\theta)$ is Lipschitz continuous in $x,$ i.e. there exists a constant $G > 0$ such that, for all $x,y \in \mathcal{X}$ and $\theta \in \Theta$

$$|f(x,\theta)-f(y,\theta)| \leq G ||x-y||.$$
\end{assumption}
\begin{assumption}\label{assum:LSmooth}
The function $f(x,\theta)$ is $L$-smooth in $x$, i.e. there exists a constant $G > 0$ such that

The function $f(x,\theta)$ is $\lambda$-strongly convex in $x,$ i.e. there exists a constant $G > 0$ such that

$$f(y,\theta) \leq f(x,\theta) + \nabla_{x}f(x,\theta)^{T}(y-x) + \frac{L}{2} ||y-x||^2$$
\end{assumption}
\begin{assumption}\label{assum:StrongCon}
The function $f(x,\theta)$ is $\lambda$-strongly convex in $x,$ i.e. there exists a constant $G > 0$ such that

$$f(y,\theta) \geq f(x,\theta) + \nabla_{x}f(x,\theta)^{T}(y-x) + \frac{\lambda}{2} ||y-x||^2$$
\end{assumption}

The first two assumptions are common throughout the OCO literature and give upper bounds on the first and second derivatives. Assumption \ref{assum:StrongCon} is a more recent assumption in the OCO literature, first appearing in dynamic regret analysis in \cite{mokhtari2016online}, but is a common assumption for the analysis of descent algorithms like gradient descent \cite{boyd2004convex}. Quadratic functions defined over a compact set are examples of functions satisfying all three assumptions.

For our analysis, we will need an additional regularity assumption concerning the behavior of gradients with respect to $\theta.$
\begin{assumption}\label{assum:LipTheta}
The function $f(x,\theta)$ has Lipschitz continuous $x$-gradients in $\theta,$ i.e. there exists some $C_{\theta} > 0$ such that, for all $\theta_{1},\theta_{2} \in \Theta$ and $x \in \mathcal{X},$ we have

$$|\nabla_{x}f(x,\theta_{1})-\nabla_{x}f(x,\theta_{2})| \leq C_{\theta} ||\theta_{1}-\theta_{2}||.$$
\end{assumption}

It is not hard to check that functional time series (Example \ref{ex:FunTimeSeries}) satisfies Assumption \ref{assum:LipTheta} provided that the sum of the $\nabla_{x}(g_{1}(x)+ \cdot \cdot \cdot + g_{m}(x))$ is bounded. It is similarly easy to see that the Markowitz portfolio function in Example \ref{ex:Markowitz} also satisfies Assumption \ref{assum:LipTheta} when viewing the collection of parameters as a column-stacked vector.

\section{Theoretical Results for Prediction in Descent}\label{sec:Theo}
\begin{algorithm}[tb]

\begin{algorithmic}
\caption{Online Predictive Gradient Descent}\label{alg:GradientDescent}
\State {Input: Step size $\eta > 0,$ and $x_{1} \in \mathcal{X}$}
\For {$t = 1 \to T$}
	\State{Receive parameter $\theta_{t}$}
	\State{Predict $\hat{\theta_{t+1}}$ from $\theta_{t},...,\theta_{1}$}
	\State{Compute $x_{t+1} = \Pi_{\mathcal{X}}(x_{t}-\eta \nabla_{x}f(x,\hat{\theta_{t+1}})$}
\EndFor
\end{algorithmic}
\end{algorithm}
We now analyze gradient descent when incorporating prediction. See Algorithm \ref{alg:GradientDescent} for the pseudocode. The main difference between Algorithm \ref{alg:GradientDescent} and standard online gradient descent is in the prediction step in the form of the parameter prediction. As prediction can mean many different things depending on the situation at hand, we avoid mentioning a particular process at this stage.

In analyzing Algorithm \ref{alg:GradientDescent}, we are specifically interested in bounding the dynamic regret in terms of the path-length expressions $\mathcal{P}^{\ast}$ and $\mathcal{S}^{\ast}$ as well as the parameter prediction regularity $P^{\theta}.$ To this end, we have the following result.

\begin{theorem} \label{thm:Main}
Let Assumptions 1-4 hold. If $\eta \leq 1/L,$ then for a $C_{\eta,\lambda} < 1$ we have the following bound on regret of Algorithm \ref{alg:GradientDescent}:

\begin{equation}
{\bf{Reg}}_{D}(\{x_{t}\}) \leq \frac{G||x_{1}-x_{1}^{\ast}||}{1-C_{\eta,\lambda}} + \frac{GC_{\eta,\lambda}}{1-C_{\eta,\lambda}}\mathcal{P}^{\ast} + \frac{G\eta C_{\theta}}{1-C_{\eta,\lambda}}P^{\theta} 
\end{equation}
\end{theorem}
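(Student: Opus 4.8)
The plan is to bound the per-step error $\|x_t - x_t^\ast\|$ and then sum using Lipschitz continuity of $f$ in $x$ (Assumption~\ref{assum:Grad}), which gives $f_t(x_t) - f_t(x_t^\ast) \le G\|x_t - x_t^\ast\|$, so that ${\bf Reg}_D(\{x_t\}) \le G\sum_{t=1}^T \|x_t - x_t^\ast\|$. Everything then reduces to controlling the sequence $a_t := \|x_t - x_t^\ast\|$.

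First I would establish a contraction for the idealized step, i.e. gradient descent toward the correct (true) objective. Under $\eta \le 1/L$ and $\lambda$-strong convexity (Assumptions~\ref{assum:LSmooth},~\ref{assum:StrongCon}), the standard analysis of projected gradient descent gives $\|\Pi_{\mathcal{X}}(x - \eta\nabla_x f(x,\theta)) - x^\ast(\theta)\| \le C_{\eta,\lambda}\|x - x^\ast(\theta)\|$ with $C_{\eta,\lambda} = \sqrt{1 - \eta\lambda} < 1$ (or a similar explicit constant), using nonexpansiveness of $\Pi_{\mathcal{X}}$ and the fact that $x^\ast(\theta)$ is a fixed point of the map. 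The catch in Algorithm~\ref{alg:GradientDescent} is that the step uses $\hat\theta_{t+1}$, not $\theta_{t+1}$. So I would write $x_{t+1} = \Pi_{\mathcal{X}}(x_t - \eta\nabla_x f(x_t,\hat\theta_{t+1}))$ and compare it with the idealized point $\tilde x_{t+1} := \Pi_{\mathcal{X}}(x_t - \eta\nabla_x f(x_t,\theta_{t+1}))$. By nonexpansiveness of the projection and Assumption~\ref{assum:LipTheta},
\[
\|x_{t+1} - \tilde x_{t+1}\| \le \eta\|\nabla_x f(x_t,\hat\theta_{t+1}) - \nabla_x f(x_t,\theta_{t+1})\| \le \eta C_\theta \|\hat\theta_{t+1} - \theta_{t+1}\|.
\]
Combining with the contraction applied at $\theta_{t+1}$ and the triangle inequality, and then inserting $x_{t+1}^\ast = x^\ast(\theta_{t+1})$:
\[
a_{t+1} = \|x_{t+1} - x_{t+1}^\ast\| \le \|x_{t+1} - \tilde x_{t+1}\| + \|\tilde x_{t+1} - x_{t+1}^\ast\| \le \eta C_\theta\|\hat\theta_{t+1} - \theta_{t+1}\| + C_{\eta,\lambda}\|x_t - x_{t+1}^\ast\|.
\]
Finally I would split $\|x_t - x_{t+1}^\ast\| \le \|x_t - x_t^\ast\| + \|x_t^\ast - x_{t+1}^\ast\| = a_t + \|x_t^\ast - x_{t+1}^\ast\|$, yielding the recursion
\[
a_{t+1} \le C_{\eta,\lambda}\, a_t + C_{\eta,\lambda}\|x_t^\ast - x_{t+1}^\ast\| + \eta C_\theta\|\theta_{t+1} - \hat\theta_{t+1}\|.
\]

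The last step is to unroll this linear recursion. Iterating gives $a_t \le C_{\eta,\lambda}^{t-1} a_1 + \sum_{s} C_{\eta,\lambda}^{t-1-s}\big(C_{\eta,\lambda}\|x_s^\ast - x_{s+1}^\ast\| + \eta C_\theta\|\theta_{s+1}-\hat\theta_{s+1}\|\big)$, and summing over $t = 1,\dots,T$ and exchanging the order of summation turns each geometric tail $\sum_{k\ge 0} C_{\eta,\lambda}^k$ into $\frac{1}{1-C_{\eta,\lambda}}$. This produces $\sum_t a_t \le \frac{a_1}{1-C_{\eta,\lambda}} + \frac{C_{\eta,\lambda}}{1-C_{\eta,\lambda}}\sum_s\|x_s^\ast - x_{s+1}^\ast\| + \frac{\eta C_\theta}{1-C_{\eta,\lambda}}\sum_s\|\theta_{s+1}-\hat\theta_{s+1}\|$; recognizing the middle sum as $\mathcal{P}^\ast$ and the last as $P^\theta$, and multiplying through by $G$, gives exactly the claimed bound.

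The main obstacle I anticipate is pinning down the contraction constant $C_{\eta,\lambda}$ cleanly in the \emph{projected} setting: one needs that $x^\ast(\theta)$ is a fixed point of $x \mapsto \Pi_{\mathcal{X}}(x - \eta\nabla_x f(x,\theta))$ (true by the first-order optimality/variational-inequality characterization of the constrained minimizer) and then a careful use of co-coercivity of $\nabla_x f$ together with strong convexity to get the one-step contraction through the nonexpansive projection. Everything else — the perturbation term via Assumption~\ref{assum:LipTheta}, the triangle-inequality bookkeeping, and the geometric-series summation — is routine; the bound's shape (a transient term from $\|x_1 - x_1^\ast\|$, a path-length term, and a prediction-error term, all scaled by $\frac{1}{1-C_{\eta,\lambda}}$) is exactly what this recursion forces.
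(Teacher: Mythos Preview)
Your proposal is correct and follows essentially the same approach as the paper: both derive the one-step recursion $\|x_{t+1}-x_{t+1}^\ast\| \le C_{\eta,\lambda}\|x_t-x_t^\ast\| + C_{\eta,\lambda}\|x_t^\ast-x_{t+1}^\ast\| + \eta C_\theta\|\theta_{t+1}-\hat\theta_{t+1}\|$ by combining nonexpansiveness of the projection, the contraction Lemma~\ref{lem:GradCont}, and Assumption~\ref{assum:LipTheta}. The only cosmetic differences are that the paper sums by the self-bounding trick (subtracting $C_{\eta,\lambda}\sum_t\|x_t-x_t^\ast\|$ from both sides) rather than unrolling the recursion, and it quotes the specific constant $C_{\eta,\lambda}=\sqrt{1-\tfrac{2\lambda\eta}{1+\eta\lambda}}$ from \cite{zhang2017improved} rather than deriving it.
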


The following lemma, which we state without proof from \cite{zhang2017improved}, makes the constant $C_{\eta,\lambda}$ in the above theorem more precise.

\begin{lemma}\label{lem:GradCont}
Let $g(x)$ be a $\lambda$-strongly convex function and $L$ smooth with minimum attained at $x^{\ast}$. Then projected gradient descent is a contraction provided that $\eta \leq \frac{1}{L}$: if $\mathcal{X}$ is the constraint set and $\Pi_{\mathcal{X}}$ is the projection onto $\mathcal{X},$ we have

$$|| \Pi_{\mathcal{X}}(v - \eta \nabla g(v))-x^{\ast} || \leq C_{\lambda,\eta}||v-x^{\ast}||$$

\noindent where $C_{\lambda, \eta} :=\sqrt{1-\frac{2 \lambda \eta}{1+\eta \lambda}}.$
\end{lemma}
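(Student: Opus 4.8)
The plan is to view one step of projected gradient descent as the composition of the gradient-step map $v\mapsto v-\eta\nabla g(v)$ with the Euclidean projection $\Pi_{\mathcal{X}}$, and to control each piece separately. First I would record that $x^{\ast}$ is a \emph{fixed point} of the full update: since $x^{\ast}$ minimizes $g$ over $\mathcal{X}$, the first-order optimality condition $\langle\nabla g(x^{\ast}),x-x^{\ast}\rangle\ge 0$ for all $x\in\mathcal{X}$ is, by the obtuse-angle characterization of the projection, precisely the statement that $x^{\ast}=\Pi_{\mathcal{X}}(x^{\ast}-\eta\nabla g(x^{\ast}))$ for every $\eta>0$. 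This lets me rewrite the left-hand side of the claim as the distance between the images of $v-\eta\nabla g(v)$ and $x^{\ast}-\eta\nabla g(x^{\ast})$ under $\Pi_{\mathcal{X}}$.

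Next I would discard the projection using its nonexpansiveness: projection onto a closed convex set is $1$-Lipschitz (indeed firmly nonexpansive), so $\|\Pi_{\mathcal{X}}(a)-\Pi_{\mathcal{X}}(b)\|\le\|a-b\|$. Applying this with $a=v-\eta\nabla g(v)$ and $b=x^{\ast}-\eta\nabla g(x^{\ast})$ reduces the claim to the unconstrained estimate $\|(v-x^{\ast})-\eta(\nabla g(v)-\nabla g(x^{\ast}))\|\le C_{\lambda,\eta}\|v-x^{\ast}\|$. Writing $d=v-x^{\ast}$ and $\Delta=\nabla g(v)-\nabla g(x^{\ast})$ and expanding, $\|d-\eta\Delta\|^{2}=\|d\|^{2}-2\eta\langle d,\Delta\rangle+\eta^{2}\|\Delta\|^{2}$, so everything hinges on lower bounding $\langle d,\Delta\rangle$ and controlling $\|\Delta\|^{2}$.

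The core estimate is then supplied by the two regularity hypotheses. Strong convexity (Assumption \ref{assum:StrongCon}) gives $\langle d,\Delta\rangle\ge\lambda\|d\|^{2}$, while $L$-smoothness (Assumption \ref{assum:LSmooth}), together with convexity, yields the co-coercivity bound $\|\Delta\|^{2}\le L\langle d,\Delta\rangle$. The step restriction $\eta\le 1/L$ is exactly what makes the coefficient $2-\eta L$ of the resulting inner-product term nonnegative, so that the strong-convexity lower bound can be substituted with the correct sign; combining these collapses the right-hand side to a scalar multiple of $\|d\|^{2}$, and taking square roots produces a contraction factor strictly below $1$.

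The step I expect to be the main obstacle is pinning down the \emph{exact} constant $C_{\lambda,\eta}=\sqrt{1-2\lambda\eta/(1+\eta\lambda)}$. A direct combination of the strong-convexity and co-coercivity inequalities of the previous paragraph already yields a valid contraction, but with a constant of comparable shape rather than this precise expression; to recover the stated form one must group the two inequalities in the particular proportion used in \cite{zhang2017improved} (equivalently, invoke the tight interpolation inequality for $\lambda$-strongly convex, $L$-smooth functions and then relax it). Verifying that this grouping is legitimate under $\eta\le 1/L$, and that the resulting factor simplifies to $(1-\eta\lambda)/(1+\eta\lambda)$ beneath the square root, is the delicate bookkeeping that the remainder of the argument would carry out.
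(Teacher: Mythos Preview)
The paper does not prove this lemma at all: it is explicitly ``stated without proof from \cite{zhang2017improved}.'' Your proposal therefore goes well beyond what the paper supplies, and the outline you give is the standard and correct one---fixed-point characterization of $x^{\ast}$ under the projected-gradient map, nonexpansiveness of $\Pi_{\mathcal{X}}$ to strip the projection, and then the expansion $\|d-\eta\Delta\|^{2}=\|d\|^{2}-2\eta\langle d,\Delta\rangle+\eta^{2}\|\Delta\|^{2}$ controlled by strong convexity and co-coercivity under $\eta\le 1/L$.

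You are also right to flag the constant as the delicate point. The direct combination you describe yields $\|d-\eta\Delta\|^{2}\le(1-2\eta\lambda+\eta^{2}L\lambda)\|d\|^{2}$, which is a valid contraction but not the stated $C_{\lambda,\eta}^{2}=(1-\eta\lambda)/(1+\eta\lambda)$; the latter does not involve $L$ and comes from the particular argument in \cite{zhang2017improved}. Since the paper simply imports the lemma as a black box, this discrepancy is not a gap in your reasoning relative to the paper---you have correctly identified exactly where one must defer to the cited reference.
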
 

The proof of the theorem is similar to other calculations of dynamic regret with a few modifications to accommodate the difference in descent strategy. We briefly summarize the proof methodology and defer the exact details to the Appendix, though the interested reader will also find the discussion in the proof of Corollary \ref{thm:MultGrad} enlightening. Previous bounds on the regret for online gradient descent were built around the fact that the descent direction at time $t$ for guessing $x_{t+1}$ was $\nabla_{x}f_{t}$ and not $\nabla_{x}f_{t+1}.$ However, we are explicitly trying to descend using a prediction of $f_{t+1}.$ Our error in this end will be driven by the quality of our prediction of $\nabla_{x} f_{t+1}.$ Assumption \ref{assum:LipTheta} gives that this is controlled by the quality of the parameter prediction. Standard bounding and rearranging then gives the proof of the theorem.

We discuss the results of the bound. When comparing our result to the results of \cite{mokhtari2016online} and \cite{zhang2017improved}, we notice that an additional multiplicative constant less than 1 appears in front of the path length $\mathcal{P}^{\ast}$ at the cost of the entire $P^{\theta}$ term. If we have perfect prediction, or even near perfect, this allows us to achieve a smaller regret bound, potentially significantly so depending on the exact quantity of $\mathcal{P}^{\ast}.$ For imperfect prediction, there is a tradeoff, and it is possible that previous regret bounds are superior in some instances. We detail some examples.

\begin{example} \label{ex:RungeKutta}
Assume that the actual amount of time between $\theta_{t}$ and $\theta_{t+1}$ is $\Delta t.$ Assume that we know that $\theta_{t}$ satisfies some ordinary differential equation $\dot{\theta}_{t} = V(\theta_{t},t).$ Runge-Kutta methods can be used to numerically integrate the ODE and compute a predicted value $\hat{\theta}_{t+1}$ whose error is of the order $( \Delta t)^{4}$. Sufficiently small values of $\Delta t$ will thus guarantee small contribution from the prediction regularity.
\end{example}

\begin{example} \label{ex:RandomNoise}
Following \cite{chen2016using}, we consider the case that our predictions are accurate up to noise, i.e. $\theta_{t}-\hat{\theta_{t}} = h^{t}_{1}X^{t}_{1}+ \cdot \cdot \cdot h^{t}_{k}X^{t}_{k}$ for each $t$ where the $X^{t}_{i}$ are i.i.d. mean zero sub-Gaussian random variables with variance parameter $\sigma^{2}$ and the $h^{t}_{i}$ are constants. Though this implies that $P^{\theta}$ is a random variable, it is well known that such a linear combination satisfies a high probability bound:

$$\mathbb{P} \left( \left| \sum_{i=1}^{k}h^{t}_{i}X^{t}_{i} \right| > \varepsilon \right)\leq \exp \left( - \frac{\varepsilon^{2}}{\sigma^{2} \left( \sum_{i=1}^{k} (h^{t}_{i})^{2} \right)} \right)$$

This implies that $P^{\theta}$ will also be small with high probability.
\end{example}

\noindent We now consider the case where we may wish to perform multiple descent steps between each iteration. This will inject powers of $C_{\eta,\lambda}$ into the above regret bound, potentially increasing the error bound. Surprisingly, this does not significantly affect the constant in front of $P^{\theta}.$

\begin{corollary}\label{thm:MultGrad}
Let Assumptions 1-4 hold. Consider Algorithm \ref{alg:GradientDescent} with the single gradient descent step replaced by $k$ gradient descent steps If $\eta \leq 1/L,$ thenwe have the following bound on regret of this revised version Algorithm \ref{alg:GradientDescent}:
\begin{equation}
{\bf{Reg}}_{D}(\{x_{t}\}) \leq \frac{G||x_{1}-x_{1}^{\ast}||}{1-C^{k}_{\eta,\lambda}} + \frac{GC^{k}_{\eta,\lambda}}{1-C_{\eta,\lambda}^{k}}\mathcal{P}^{\ast} + \frac{G\eta C_{\theta}}{1-C_{\eta,\lambda}}P^{\theta} 
\end{equation}
\end{corollary}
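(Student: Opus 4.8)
The plan is to mirror the proof of Theorem \ref{thm:Main} (deferred to the Appendix), tracking carefully how the $k$-fold iteration of projected gradient descent changes the per-step contraction estimate. First I would fix a time $t$ and let $x_{t+1}$ denote the iterate produced by applying $k$ projected gradient steps to $x_t$, all using the \emph{predicted} gradient $\nabla_x f(\cdot,\hat\theta_{t+1})$. The key quantity to bound is $\|x_{t+1}-x_{t+1}^\ast\|$. Writing $\tilde x$ for the corresponding $k$-step iterate that would have used the \emph{true} gradient $\nabla_x f(\cdot,\theta_{t+1})$ (whose fixed point is exactly $x_{t+1}^\ast$), Lemma \ref{lem:GradCont} applied $k$ times gives $\|\tilde x - x_{t+1}^\ast\| \le C_{\eta,\lambda}^{k}\|x_t - x_{t+1}^\ast\|$. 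The remaining task is to control $\|x_{t+1}-\tilde x\|$, the drift caused by using predicted rather than true gradients across $k$ steps.

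The main obstacle is exactly this error-propagation step: I must show that using the wrong gradient for $k$ consecutive steps does not blow up the prediction error by a factor that scales badly in $k$. The natural approach is a telescoping/recursion argument. At each of the $k$ inner steps, nonexpansiveness of the projection $\Pi_{\mathcal X}$ plus Assumption \ref{assum:LipTheta} shows that one gradient-step map using $\hat\theta_{t+1}$ differs from the same map using $\theta_{t+1}$ by at most $\eta C_\theta\|\theta_{t+1}-\hat\theta_{t+1}\|$, while the map itself is $C_{\eta,\lambda}$-contractive. Unrolling this perturbed fixed-point iteration yields
\begin{equation}
\|x_{t+1}-\tilde x\| \le \eta C_\theta \|\theta_{t+1}-\hat\theta_{t+1}\| \sum_{j=0}^{k-1} C_{\eta,\lambda}^{j} \le \frac{\eta C_\theta}{1-C_{\eta,\lambda}}\|\theta_{t+1}-\hat\theta_{t+1}\|,
\end{equation}
which is uniform in $k$ — this is the ``surprising'' insensitivity of the $P^\theta$ coefficient noted in the text, and it is the crux of the result. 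Combining the two bounds via the triangle inequality gives
\begin{equation}
\|x_{t+1}-x_{t+1}^\ast\| \le C_{\eta,\lambda}^{k}\|x_t - x_{t+1}^\ast\| + \frac{\eta C_\theta}{1-C_{\eta,\lambda}}\|\theta_{t+1}-\hat\theta_{t+1}\|.
\end{equation}

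From here the argument is the routine bookkeeping already used for Theorem \ref{thm:Main}. I would insert $\|x_t - x_{t+1}^\ast\| \le \|x_t - x_t^\ast\| + \|x_t^\ast - x_{t+1}^\ast\|$, unroll the recursion over $t=1,\dots,T$, and sum the geometric series in $C_{\eta,\lambda}^{k}$; the initial-condition term picks up $1/(1-C_{\eta,\lambda}^{k})$, the path-length term $\sum_t\|x_t^\ast-x_{t+1}^\ast\|=\mathcal P^\ast$ picks up the factor $C_{\eta,\lambda}^{k}/(1-C_{\eta,\lambda}^{k})$, and the prediction term $\sum_t\|\theta_{t+1}-\hat\theta_{t+1}\| = P^\theta$ keeps the coefficient $\eta C_\theta/(1-C_{\eta,\lambda})$ inherited from the inner-loop bound (with the extra overall $1/(1-C_{\eta,\lambda}^k)$ absorbed, or alternatively bounded using $1-C_{\eta,\lambda}^k \ge 1-C_{\eta,\lambda}$ to match the stated form). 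A final application of Assumption \ref{assum:Grad} converts iterate errors $\|x_t-x_t^\ast\|$ into function-value regret $f_t(x_t)-f_t(x_t^\ast) \le G\|x_t-x_t^\ast\|$, yielding the claimed bound. Setting $k=1$ recovers Theorem \ref{thm:Main}, which is a useful consistency check.
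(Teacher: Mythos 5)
Your overall architecture is close to the paper's but your decomposition is genuinely different, and the difference is where the trouble lies. The paper never compares two parallel trajectories: it defines $z_t^{j}$ as the $j$-th predicted-gradient iterate and runs a single recursion on $\|z_t^{j}-x_t^{\ast}\|$, inserting the triangle inequality \emph{at each inner step} (add and subtract the true-gradient step applied to $z_t^{j-1}$) so that Lemma \ref{lem:GradCont} is only ever invoked in the form it is stated — contraction of the distance \emph{to the fixed point} $x_t^{\ast}$. Your route instead introduces the true-gradient trajectory $\tilde x$ and bounds the drift $\|x_{t+1}-\tilde x\|$ by asserting that the one-step true-gradient map is $C_{\eta,\lambda}$-contractive \emph{between two arbitrary points} (namely the $j$-th iterates of the two trajectories, neither of which is $x_{t+1}^{\ast}$). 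Lemma \ref{lem:GradCont} does not give you that. A point-to-point contraction does hold under Assumptions \ref{assum:LSmooth}--\ref{assum:StrongCon} with $\eta\le 1/L$ (via co-coercivity of the gradient), but its modulus is $\sqrt{1-2\eta\lambda L/(\lambda+L)}$, which is in general strictly larger than $C_{\eta,\lambda}=\sqrt{1-2\lambda\eta/(1+\eta\lambda)}$, so even the repaired version of your drift bound does not deliver the constant you claim. The paper's single-recursion bookkeeping sidesteps this entirely.

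The second problem is in your final assembly of the $P^{\theta}$ coefficient. If you majorize the inner geometric sum by $\sum_{j\ge 0}C_{\eta,\lambda}^{j}=1/(1-C_{\eta,\lambda})$ \emph{before} unrolling over $t$, the outer recursion with contraction factor $C_{\eta,\lambda}^{k}$ contributes an additional factor $1/(1-C_{\eta,\lambda}^{k})\ge 1$ to the prediction term, leaving a coefficient $G\eta C_{\theta}/\bigl((1-C_{\eta,\lambda})(1-C_{\eta,\lambda}^{k})\bigr)$, which is strictly weaker than the stated $G\eta C_{\theta}/(1-C_{\eta,\lambda})$; and your fallback $1-C_{\eta,\lambda}^{k}\ge 1-C_{\eta,\lambda}$ only degrades this further to $G\eta C_{\theta}/(1-C_{\eta,\lambda})^{2}$. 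The way to land exactly on the stated constant is the paper's: keep the finite sum $\sum_{i=0}^{k-1}C_{\eta,\lambda}^{i}=(1-C_{\eta,\lambda}^{k})/(1-C_{\eta,\lambda})$ in the per-step bound, so that the numerator $(1-C_{\eta,\lambda}^{k})$ cancels the $1/(1-C_{\eta,\lambda}^{k})$ produced when you solve the outer recursion. The rest of your argument — the per-step inequality $\|x_t-x_{t+1}^{\ast}\|\le\|x_t-x_t^{\ast}\|+\|x_t^{\ast}-x_{t+1}^{\ast}\|$, the summation over $t$, and the conversion to regret via Assumption \ref{assum:Grad} — matches the paper and is fine.
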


\begin{proof}
The same idea as the single gradient proof holds, but the estimate is slightly different. Indeed, we can replace the one step gradient descent with a $k$-step gradient to also get a contraction, with $C_{\eta,\lambda}$ being replaced by $C_{\eta,\lambda}^{k}.$ However, as we are computing a gradient descent step with $\hat{\theta_{t}}$ and not $\theta_{t},$ we must use a triangle inequality at every iteration before we can use the contractive estimate.

More precisely, let $z_{t}^{k}$ be the point obtained by using $k$ predicted gradient descent steps from $x_{t-1}$. We can repeat the above analysis to see

\begin{align*}
||z_{t}^{k}-x_{t}^{\ast}|| &= ||z_{t}^{k-1} - \eta \nabla_{x}f(z_{t}^{k-1},\hat{\theta_{t}}) - x_{t}^{\ast}|| \\
&\leq ||z_{t}^{k-1} - \eta \nabla_{x}f(z_{t}^{k-1},\theta_{t})- x_{t}^{\ast} || \\ 
&+ \eta || \nabla_{x}f(z_{t}^{k-1},\theta_{t})-\nabla_{x}f(z_{t}^{k-1},\hat{\theta_{t}})|| \\
&\leq C_{\eta,\lambda} ||z_{t}^{k-1}-x_{t}^{\ast}|| + \eta C_{\theta} ||\theta_{t} - \hat{\theta_{t}}|| \\
\end{align*}

\noindent A routine induction gives
\begin{align*}
||z_{t}^{k}-x_{t}^{\ast}|| &\leq C_{\eta,\lambda}^{k} ||x_{t-1}-x_{t}^{\ast} || + C_{\theta} \eta ||\theta_{t}-\hat{\theta_{t}}|| \sum_{i=0}^{k-1} C_{\eta,\lambda}^{i} \\
&\leq C_{\eta,\lambda}^{k} ||x_{t-1}-x_{t}^{\ast} || + \frac{C_{\theta} \eta((1-C_{\eta,\lambda}^{k})}{1-C_{\eta,\lambda}} ||\theta_{t}-\hat{\theta_{t}}||
\end{align*}
\noindent where the last inequality follows by majorizing the summation in the first inequality by an infinite series and subsequent summation.
\end{proof}
We end this section by investigating the generalizability of our proof to other descent methods. The main idea outlined above is to replace the estimated descent step with the true descent step and then estimate the error by looking at both the contraction of the true descent direction and the prediction error of the descent step. More formally, we consider a general descent algorithm, where the gradient descent step is replaced by $x_{t+1} = \Pi_{\mathcal{X}}(x_{t}-\eta D_{\text{est}})$ for some descent direction $D_{\text{est}}.$ If $D_{\text{true}}$ is the descent direction for the true value of the parameter and $D_{\text{est}}$ is that for the predicted value of the parameter, then for this general version, we have the following estimate:

\begin{align*}
||\Pi_{\mathcal{X}}(x_{t}-\eta D_{\text{est}})-x_{t+1}^{\ast}|| &\leq ||\Pi_{\mathcal{X}}(x_{t}-\eta D_{\text{true}})-x_{t+1}^{\ast}|| \\
&+ ||\Pi_{\mathcal{X}}(x_{t}-\eta D_{\text{true}})-\Pi_{\mathcal{X}}(x_{t}-\eta D_{\text{est}})|| \\
&\leq ||\Pi_{\mathcal{X}}(x_{t}-\eta D_{\text{true}})-x_{t+1}^{\ast}|| \\
&+||\eta D_{\text{true}}-\eta D_{\text{est}}||
\end{align*}
by nonexpansiveness of the projection. Provided that the true descent direction yields a contraction, then with sufficient regularity of the descent directions with respect to the parameter $\theta,$ we can get an analogous result for different regularity assumptions on $f(x,\theta).$ For example, if we were to pick a Newton step, so $D_{\text{true}} = \left( \nabla^{2}_{x} f(x,\theta_{t+1}) \right)^{-1} \nabla_{x}f(x,\theta_{t+1})$ and similarly for $D_{\text{est}},$ we can bound the difference by

\begin{align*}
||D_{\text{true}}-D_{\text{est}}|| &\leq ||A^{-1} \left( \nabla_{x}f(x,\theta_{t+1})-\nabla_{x}f(x,\hat{\theta_{t+1}}) \right)|| \\
&+ ||\left( \left(\nabla^{2}_{x} f(x,\theta_{t+1}) \right)^{-1} - \left(\nabla^{2}_{x} f(x,\hat{\theta_{t+1}}) \right)^{-1} \right) B||
\end{align*}

where $A = \nabla^{2}_{x} f(x,\theta_{t+1})$ and $B = \nabla_{x}f(x,\hat{\theta}_{t+1}).$ Under sufficient regularity of the operator norm of the inverse Hessian of $f,$ we can obtain a similar expression in terms of $P^{\theta}$ as we did for gradient descent in Theorem \ref{thm:Main}. We do not investigate this further, but instead use the above to illustrate that our technique is not particular to the gradient descent step used.

\section{SMAD: Simultaneous Modeling and Descent}\label{sec:SMAD}

The theoretical results in the previous section are rather general, and give regret bounds in terms of the quality of prediction without specifying how this prediction is done in general. Frequently we do not know the true process generating the data we are trying to predict, but instead have a collection of candidate models for which we hope at least one will make accurate predictions. To this end, we would like to develop a practical algorithm that will gradually learn the best data generating model over which to optimize among a predetermined collection of models. 

\begin{algorithm}[tb]

\begin{algorithmic}
\caption{Expert Learning Algorithm for Simultaneous Descent and Data Modeling}\label{alg:Learning}
\State {Input: Parameters $\beta, \eta, \gamma > 0,$ initial data $\theta_{-i_{0}},...,\theta_{0}$}
\State {Output: {\bf{$p_{t}$}} = $[p_{t,1},...,p_{t,N}]$ (predictive distribution over the collection of $N$ models), $x_{t}$ (predictive points for optimization) }
\State {Initialize $w_{1,0}=...=w_{k_0,0} = 1,$ $w_{k,0} = 0$ for $k > k_{0}$}
\State {Initialize $M = k_{0}$}
\For {$t = 1 \to T$}
	\If {Initializing new model}
		\State {Compute $w_{i,t-1} = (1-\beta) w_{i,t-1}$ for $i \leq M$}
		\State {Compute $w_{M+1,t-1} = \beta$}
		\State {Compute $p_{i,t-1} = \frac{w_{i,t}}{\sum_{i=1}^{N} w_{i,t}}$}
		\State {Initialize $x_{t-1}^{M+1} = x_{t-1}$}
		\State {Compute $M = M+1$}
		
	\EndIf
	
	\State {Observe $\theta_{t}$}
	\State {Initialize $x_{t} = 0$}
	\For {$1 \leq i \leq M$}
		\State{Receive $v_{i} = \Pi_{\mathcal{X}}(x_{t-1}^{i}- \eta \nabla_{x}f(x_{t-1}^{i},\hat{\theta_{t}^{i}}))$ from expert $i$}
		\State{Compute $x_{t} = x_{t} + p_{i,t-1}v_{i}$}
		\State{Compute  $l_{i} = \exp[- \gamma f(\mathcal{X}(v_{i}),\theta_{t})]$ }
	\State{Compute $w_{i,t} = p_{i,t-1}l_{i}$}
	\EndFor
	\State {Output $x_{t} = \Pi_{\mathcal{X}}(x_{t})$}
	\State{Compute $w_{i,t} = p_{i,t-1}l_{i}$}
	\State{Compute $p_{i,t} = \frac{w_{i,t}}{\sum_{i=1}^{N} w_{i,t}}$}
\EndFor
\end{algorithmic}
\end{algorithm}

In particular, we follow the ideas of MetaGrad and Ader \cite{van2016metagrad,zhang2018adaptive} and employ an approach based on expert learning. Expert learning has been well studied (see, e.g., \cite{cesa2006prediction}) and is summarized as follows. Each expert corresponds to a particular class of data-generating models (for example, each expert can correspond to the lag of an autoregressive (AR) model). We assume that each expert knows the specific function $f(x,\theta)$ that we are trying to optimize. At time $t-1,$ each expert makes a prediction $\hat{\theta_{t}^{i}}$ as to the future value of $\theta$ and uses this prediction in order to evaluate its own predictive online gradient descent procedure, thus giving a predicted value of $x$ denoted $x_{t}^{i}$. Each expert suffers a loss based on evaluating $f(x_{t}^{i},\theta_{t}),$ the experts are reweighted using a Gibbs posterior update procedure, and the process is repeated until the last time point is reached.

We briefly comment on the conditional statement in our expert descent algorithm. From a practical standpoint, when one wishes to start modeling data with a collection of models, they may not have a sufficient amount of data to reliably use a particular model. For example, if one wishes to model a time series of data with an AR($k$) process via the Yule-Walker equations, one cannot reliably estimate a model if the number of data points is not sufficiently large relative to $k.$ To accommodate for this, we allow the user to specify whether/when additional models are added, and do this by mixing a new model in by reweighting the predictive distribution. When a new model comes online, we propose to initialize its candidate $x$-value at the previous point output by the algorithm. We will make use of these procedures in our numerical examples, but for ease of presentation avoid this in our theoretical analysis.

We present the results of our theoretical analysis below.

\begin{theorem} \label{thm:ExpertLearning}
Assume Assumptions 1-4 hold and that the range of $f(x,\theta),$ $D:= \sup_{x,\theta} f(x,\theta) - \inf_{x,\theta}f(x,\theta)$ is bounded. Then if Algorithm \eqref{alg:Learning} introduces no additional models upon starting, it has regret bound

\begin{align*}
{\bf{Reg}}_{D}(\{x_{t}\}) &\leq \frac{G||x_{1}-x_{1}^{\ast}||}{1-C_{\eta,\lambda}} + \frac{GC_{\eta,\lambda}}{1-C_{\eta,\lambda}}\mathcal{P}^{\ast} \\
&+ \frac{G\eta C_{\theta}}{1-C_{\eta,\lambda}} \min_{i} P^{\theta}_{i} + \frac{D \sqrt{2T}}{4} (1+ \ln N)
\end{align*}
where $P^{\theta}_{i}$ is the parameter prediction regularity for model $i$ and $\min_{i}$ is with respect to the available models.
\end{theorem}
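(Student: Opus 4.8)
The plan is to combine the single-model regret bound of Theorem \ref{thm:Main} with a standard expert-aggregation (exponential weights / Hedge) argument, exploiting the convexity of $f(\cdot,\theta_t)$ so that the losses incurred by the aggregated iterate $x_t$ are controlled by a convex combination of the per-expert losses. First I would fix an arbitrary expert $i^*$ (in particular I will eventually take $i^*$ to be the minimizer of $P^\theta_i$) and bound its dynamic regret. Expert $i^*$ runs exactly Algorithm \ref{alg:GradientDescent} with its own prediction sequence $\hat\theta^{i^*}_t$, so by Theorem \ref{thm:Main},
\begin{equation*}
\sum_{t=1}^{T}\bigl(f(x_t^{i^*},\theta_t) - f(x_t^\ast,\theta_t)\bigr) \leq \frac{G\|x_1-x_1^\ast\|}{1-C_{\eta,\lambda}} + \frac{GC_{\eta,\lambda}}{1-C_{\eta,\lambda}}\mathcal{P}^\ast + \frac{G\eta C_\theta}{1-C_{\eta,\lambda}}P^\theta_{i^*}.
\end{equation*}
This handles the first three terms of the claimed bound.

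Next I would control the gap between the aggregated iterate and the best expert, i.e. $\sum_{t=1}^T f(x_t,\theta_t) - \sum_{t=1}^T f(x_t^{i^*},\theta_t)$. Since $x_t = \Pi_{\mathcal{X}}\bigl(\sum_i p_{i,t-1} v_i\bigr)$ with $v_i = x_t^i \in \mathcal{X}$ already, the projection is the identity and $x_t = \sum_i p_{i,t-1} x_t^i$ is a genuine convex combination; by Jensen's inequality $f(x_t,\theta_t) \leq \sum_i p_{i,t-1} f(x_t^i,\theta_t)$. So it suffices to bound $\sum_t \sum_i p_{i,t-1} f(x_t^i,\theta_t) - \sum_t f(x_t^{i^*},\theta_t)$, which is precisely the standard ``Hedge against the best expert'' regret. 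With the Gibbs/exponential-weights update $w_{i,t}= p_{i,t-1}\exp(-\gamma f(x_t^i,\theta_t))$ and losses lying in an interval of length $D$, the classical analysis (e.g. \cite{cesa2006prediction}) gives, after rescaling losses to $[0,1]$ and optimizing $\gamma$, a bound of the form $\frac{\ln N}{\gamma} + \frac{\gamma D^2 T}{8} \le \frac{D\sqrt{2T}}{4}(1+\ln N)$ for an appropriate choice of $\gamma$. I would spell out that this uses only $0 \le f \le D$ (up to an additive constant that cancels in the regret) and the potential-function telescoping on $\ln \sum_i w_{i,t}$.

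Finally I would add the two pieces: the dynamic-regret bound for expert $i^*$ plus the expert-tracking bound, then minimize over the choice of $i^*$ — since only the $P^\theta_{i^*}$ term depends on $i^*$, this yields $\min_i P^\theta_i$ in the third term and leaves the $\frac{D\sqrt{2T}}{4}(1+\ln N)$ term untouched, giving exactly the stated inequality. I would also note that the hypothesis ``no additional models introduced'' means $M=N$ throughout, so the conditional block in Algorithm \ref{alg:Learning} never fires and the weights evolve by the clean exponential-weights recursion, which is what makes the textbook bound directly applicable.

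The main obstacle — really the only nontrivial point — is making the reduction to the standard expert bound airtight: one must verify that the aggregated point genuinely stays a convex combination of the experts' feasible points so that Jensen applies (here it does because each $v_i\in\mathcal{X}$ and $\mathcal{X}$ is convex, so the final projection is vacuous), and one must be careful that the exponential-weights guarantee is stated against the \emph{fixed} comparator sequence $\{x_t^{i^*}\}$ of a single expert rather than a shifting comparator — no ``tracking'' version of Hedge is needed because the dynamic comparator $x_t^\ast$ is absorbed entirely into the single-expert bound from Theorem \ref{thm:Main}. Everything else is routine: the telescoping of the log-potential, Hoeffding's bound on the per-round mixing loss, and the tuning of $\gamma$.
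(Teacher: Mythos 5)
Your proposal is correct and follows essentially the same route as the paper's proof: apply Theorem \ref{thm:Main} to each individual expert, and control the gap between the aggregated iterate and the best expert via the standard exponential-weights potential argument (telescoping $\ln W_t$, Hoeffding's lemma, Jensen's inequality on the convex combination, and tuning $\gamma$) to obtain the $\frac{D\sqrt{2T}}{4}(1+\ln N)$ term. Your explicit observation that the final projection is vacuous because $\sum_i p_{i,t-1}v_i$ is already a convex combination of points in $\mathcal{X}$ — so Jensen's inequality applies directly to $f(x_t,\theta_t)$ — is a detail the paper's Equation \eqref{eqn:HoeffJen} uses implicitly, and is worth stating.
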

The proof combines standard techniques for evaluating expert learning algorithms as in \cite{cesa2006prediction} along with previous regret analysis for predictive online gradient descent and is reserved for the Appendix.

\section{Numerical Experiments}\label{sec:Numer}
We now detail numerical experiments investigating the efficacy of prediction in OCO for parametric objectives as well as the performance of our objective function. We will primarily compare our prediction related results to standard online gradient descent (OGD), where the descent direction is fully determined by the value of the objective function at the present. From an intuitive perspective, if the process governing the objective function is relatively stationary with small variation between time steps, we would anticipate minimal difference between standard OGD and the predictive version we laid out above. The main differences should arise when there are predictable but significant jumps in the parameter governing the objective function for which a method with close to accurate models will predict reasonably well, whereas OGD will suffer a loss for not catching the jump.

\subsection{OGD versus Prediction with Fixed Model}
We consider the case where we have one reasonable candidate model of the objective function parameter. This first experiment is adapted directly from \cite{mokhtari2016online}. We consider the parametric objective function

\begin{align*}
f_{t}([x_{1},x_{2}]) &= f([x_{1},x_{2}],[a_{t},b_{t},c_{t}] \\
&= 100(x_{1}-a_{t})^{2} + (x_{2}-b_{t})^{2} +c_{t}
\end{align*}

\noindent where $[a_{t},b_{t},c_{t}]$ alternates between $[-100,0,30] + \epsilon_{t}$ and $[100,20,-50]+\epsilon_{t}$ every four iterations, where $\epsilon_{t}$ is three-dimensional Gaussian noise with mean zero and covariance $10I_{3}.$ The constraint set is the disc centered at the origin with radius 50. We compare the performance of OGD with the performance of the following procedure: follow OGD for the first 10 timesteps, then estimate a two dimensional AR(4) model for $[a_{t},b_{t}]$ using the Yule-Walker equations, and use predictive online gradient descent. Both methods will be initalized at $[0,40].$ Following the convention in \cite{mokhtari2016online}, we set the step size for both methods to be $1/200.$ 

\begin{figure}[tb]
\center
\includegraphics[scale=.45]{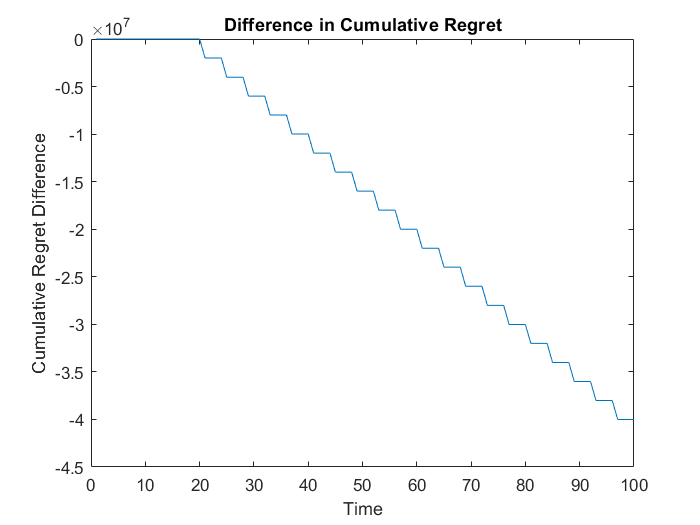}

\caption{Difference in dynamic regret between Predictive OGD and OGD. Lower numbers indicate better performance for Predictive OGD.}
\label{fig:Experiment1}
\end{figure} 

The results of the experiment averaged over fifty repetitions can be found in Figure \ref{fig:Experiment1}. The main validation measure we employ is the difference in cumulative regret between the predictive version of OGD and the standard version of OGD. For this measure, lower values indicate better performance of the predictive method. As expected, the curve remains flat for the first 10 timesteps as the descent method is the same. When the model estimation turns starts, the predictive method begins to outperform the standard OGD as evidenced by the gradually decreasing curve in the figure. The steps on the curve is indicative of the step-like behavior of the cumulative OGD regret as observed in \cite{mokhtari2016online}.

\subsection{Expert Learning on Synthetic Data}
We would now like to test out Algorithm \ref{alg:Learning} in a misspecified setting, i.e. when the model classes do not contain the true objective parameter generating process. To this end, we keep most of the same settings as in the first experiment, but we change the switching process: $[a_{t},b_{t},c_{t}]$ alternates between $[-100,0,30] + \epsilon_{t}$ for four time steps and $[100,20,-50]+\epsilon_{t}$ for six time steps. The models that we use are AR($k$) for $k$ between 1 to 5. We first observe the process for ten time steps before using an AR(1), and then add a new AR model every ten time steps until all five are active. All models are again estimated by the Yule Walker equations. For the other parameters of the algorithm, we use $\beta = 0.2$ and $\gamma = 5 \times 10^{-7}.$

\begin{figure}[tb]
\center
\includegraphics[scale=.45]{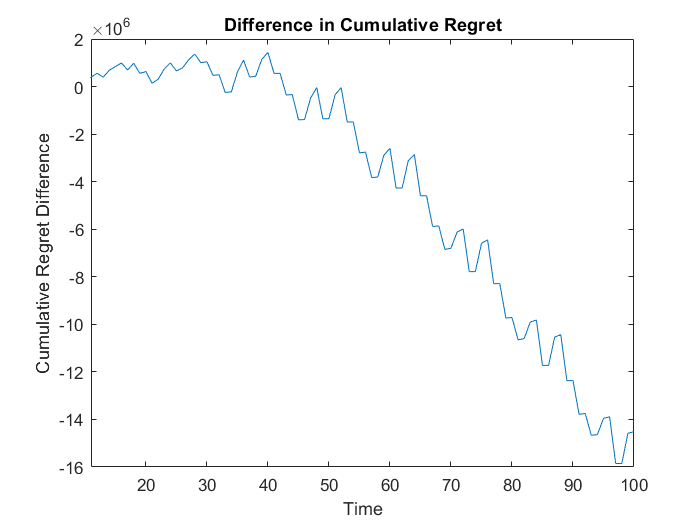}

\caption{Difference in dynamic regret between our expert learning method and OGD. Lower numbers indicate better performance for the expert learning method.}
\label{fig:Experiment2}
\end{figure} 
The results of the experiment averaged over fifty repetitions can be found in Figure \ref{fig:Experiment2}. Initially, the algorithm performs worse than OGD, which is not surprising as the initial models are not close to the parameter process. As higher lag AR models are added and the available models include ones closer to the true process, the performance of the expert learning method improves, eventually becoming the clear favorite over standard OGD.

\subsection{Expert Learning on Financial Data}
As a real world application, we now consider the problem of portfolio optimizationThough there are a number of different ways to construct portfolios in both online and offline settings (see, e.g., \cite{markowitz1952portfolio,cover2011universal,ponsich2013survey}, we restrict our attention to Markowitz portfolio theory as introduced in Example \ref{ex:Markowitz}. To remind the reader, given a collection of assets $z_{1},...,z_{n},$ the Markowitz optimal portfolio allocation is given by:

\begin{equation}
\argmin_{x \in \mathbb{R}^{n}} x^{T}\Sigma_{t}x - \lambda_{t} x^{T} \mu_{t} \\
\end{equation}

$$x_{1},...,x_{n} \geq 0, \sum_{i=1}^{n}x_{i} = 1$$

\noindent where $\Sigma$ represents the covariance matrix for the returns of the assets, $\mu$ is the average return of each asset, and $\lambda > 0$ is a parameter encoding the tradeoff between expected returns and risk; optimizing with $\lambda = 0$ finds the portfolio with the least amount of risk.

In this framework, we consider a hypothetical scenario of a client with a rapidly changing, noisy risk tolerance. In this scenario, a portfolio manager reaches out to his client every month (30 days) with a series of new portfolios, each of which is constructed built by estimation of their client's risk tolerance and subsequent optimizing of Equation \eqref{eqn:Markowitz} given different lookback periods on a given collection of assets to be between 15 and 90 days in increments of 15 for computing relevant means and covariances. The risk tolerance is estimated by a series of autoregressive processes on the risk tolerance with lags between 30 and 180 days in increments of 30\footnote{Risk is only observed every 30 days}.

Unbeknownst to the manager, the client evaluates the portfolio also via Equation \eqref{eqn:Markowitz}, but with a 50 day lookback period and a risk generated by the following process. For the first 240 days, the client's risk parameter is $\max(4 + \varepsilon,0),$ where $\varepsilon$ is Gaussian noise of mean zero and variance 0.64. The remaining risk parameters are generated as follows. Setting $b_{0} = 4,$ we have $\lambda_{t} = \max(b_{t}+\varepsilon_{t},0),$ where $b_{t}$ satisfies for $t \geq 1$:

$$ b_{t+1} = \begin{cases} b_{t} & \text{with probability 0.9} \\ Unif(1,...,20) & \text{with probability 0.1} \end{cases}$$

\noindent and the $\varepsilon_{t}$ are Gaussian with mean zero and variance 0.64. Here, $Unif(1,...,20)$ denotes the discrete uniform distribution on the integers $1,...,20$. As data, we make use of the NYSE dataset used frequently in the portfolio optimization literature, a collection of 36 stock returns taken over a period of 22 years \cite{borodin2004can,li2014online}. We also add a risk-free asset that gives constant, low returns of 1\% every 360 days compounded daily. The goal, as with the synthetic experiment, is to predict the best portfolio of those offered that optimizes the client's objective function without knowing that objective function in the future.

Initially, both methods started with portfolio uniform across the assets. Both method used a descent step size of 0.1 and a learning rate of $\gamma = 50.$ The projection step is performed by setting negative amounts of assets equal to zero before normalizing the percentages of remaining assets. We assume that the learner has 10 months to observe the risk trends before starting, which gives a sufficient amount of data in order to estimate the risk process via the Yule Walker equations. This renders the mixing constant $\beta$ irrelevant to this simulation. Our evaluation period occurs over 150 months.
\begin{figure}[tb]
\center
\includegraphics[scale=.45]{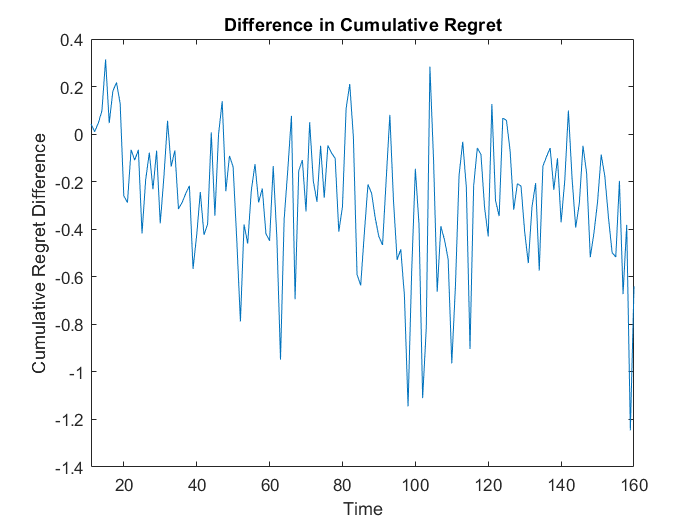}

\caption{Difference in dynamic regret between our expert learning method and OGD. Lower numbers indicate better performance for the expert learning method.}
\label{fig:Experiment3}
\end{figure}

The results of this experiment averaged over 200 times can be seen in Figure \ref{fig:Experiment3}. Unsurprisingly, there is plenty of noise in the resulting curve due to the potentially large fluxuations in the risk parameter. Nevertheless, though there are occasionally spikes for which standard OGD does better, we notice that the difference in cumulative regret between the expert learning and the OGD methods tends to be negative, indicating better performance by our expert learning algorithm.
\section{Conclusion} \label{sec:Conc}

We have discussed the problem of online convex optimization for a wide class of parametric objective functions, for which prediction of parameters subsequently gives us predictions of objective functions. We analyzed a predictive version of online gradient descent and showed that its dynamic regret can improve on currently known bounds provided that prediction of parameters is accurate. We also proposed SMAD, an expert learning-based algorithm that allows us to simultaneously model the parameter process and optimize. We finally showed via numerical examples the power of prediction in OCO, especially in environments where sharp changes can occur, and showed that SMAD can offer better performance than standard online gradient descent in both synthetic and real data settings.

There are a number of directions in which to extend this work. It would be interesting to consider the effect of other smoothness conditions on objective functions, such as self-concordance and semi-strong convexity to see how much improvement we can get on regret bounds as was done in \cite{zhang2017improved}. It would also be interesting to investigate the effect of prediction in optimization when trying to predict parametric {\emph{constraint}} functions, though this will inevitably be challenging due to potential constraint violations from inaccurate predictions. Finally, since predictions can sometimes yield confidence intervals, it would be extremely interesting to explore this problem from the lens of robust optimization, where one focuses on minimizing the maximal possible loss. \cite{ben2009robust}

\section{Acknowledgements}

This work was funded by DARPA grant number FA8650-18-1-7837.


\bibliography{icmlbib}
\bibliographystyle{abbrv}
\onecolumn
\begin{appendix}
\subsection{Proof of Theorem \ref{thm:Main}}\label{sec:MainProof}

The dynamic regret upper bound computation is similar to others. Recall that $x_{t}^{\ast}$ denotes the minimizers  From the assumption that $x$-gradients are bounded above by a constant $G$ for all $\theta,$ we have

\begin{equation}\label{eqn:initGradEst}
\sum_{t=1}^{T} f(x_{t},\theta_{t})- f(x_{t}^{\ast},\theta_{t}) \leq G \sum_{t=1}^{T} ||x_{t}-x_{t}^{\ast}|| = G||x_{1}-x_{1}^{\ast}|| + G\sum_{t=2}^{T} ||x_{t}-x_{t}^{\ast}||
\end{equation}

\noindent To bound the sum on the right-hand side, observe by the definition of the algorithm and the triangle inequality, we have:
\begin{align*}
\sum_{t=2}^{T} ||x_{t}-x_{t}^{\ast}||&= \sum_{t=2}^{T} || (\Pi_{\mathcal{X}}(x_{t-1}- \eta \nabla_{x}f(x_{t-1},\hat{\theta_{t}}) -x_{t}^{\ast}|| \\
&\leq \sum_{t=2}^{T} \left(|| \Pi_{\mathcal{X}}(x_{t-1}- \eta \nabla_{x}f(x_{t-1},\theta_{t})) -x_{t}^{\ast}|| + \eta ||\nabla_{x}f(x_{t-1},\theta_{t}) - \nabla_{x}f(x_{t-1},\hat{\theta_{t}})|| \right)
\end{align*}

\noindent We implicitly used the fact that the projection operator is nonexpansive in our setting in the second inequality. So, we have, referencing Lemma \ref{lem:GradCont}:

\begin{align*}
\sum_{t=2}^{T} ||x_{t}-x_{t}^{\ast}||&\leq \sum_{t=2}^{T} \left(||\Pi_{\mathcal{X}}(x_{t-1}- \eta \nabla_{x}f(x_{t-1},\theta_{t}) -x_{t}^{\ast}|| + \eta ||\nabla_{x}f(x_{t-1},\theta_{t}) - \nabla_{x}f(x_{t-1},\hat{\theta_{t}})|| \right) \\
&\leq \sum_{t=2}^{T} \left(C_{\eta,\lambda}||x_{t-1}-x_{t}^{\ast}|| + \eta ||\nabla_{x}f(x_{t-1},\theta_{t}) - \nabla_{x}f(x_{t-1},\hat{\theta_{t}})|| \right) \\
&\leq \sum_{t=2}^{T} \left(C_{\eta,\lambda}||x_{t-1}-x_{t}^{\ast}|| + \eta C_{\theta}||\theta_{t}-\hat{\theta_{t}}|| \right) \\
&\leq C_{\eta,\lambda} \sum_{t=2}^{T} ||x_{t-1}-x_{t-1}^{\ast}|| + C_{\eta,\lambda} \sum_{t=2}^{T} ||x_{t-1}^{\ast}-x_{t}^{\ast}|| + \eta C_{\theta} \sum_{t=2}^{T} ||\theta_{t}-\hat{\theta_{t}}|| \\
&\leq C_{\eta,\lambda} ||x_{1}-x_{1}^{\ast}|| + C_{\eta,\lambda}\sum_{t=2}^{T} ||x_{t}-x_{t}^{\ast}|| + C_{\eta,\lambda} \sum_{t=2}^{T} ||x_{t-1}^{\ast}-x_{t}^{\ast}|| + \eta C_{\theta} \sum_{t=2}^{T} ||\theta_{t}-\hat{\theta_{t}}||
\end{align*}
\noindent where the second inequality follows from Lemma \ref{lem:GradCont}, the third inequality follows from the Lipschitz continuity in $\theta$ of the $x$-gradients, the fourth inequality from the triangle inequality, and the last inequality from the norm being nonnegative. Subtracting the second term on the last line from both sides, because $C_{\eta, \lambda} <1,$ we see that

\begin{equation}
\sum_{t=2}^{T} ||x_{t}-x_{t}^{\ast}|| \leq \frac{C_{\eta,\lambda}}{1-C_{\eta,\lambda}} \left(||x_{1}-x_{1}^{\ast}|| + \sum_{t=1}^{T-1} ||x_{t}^{\ast}-x_{t+1}^{\ast}|| \right) + \frac{C_{\theta} \eta}{1-C_{\eta,\lambda}} \sum_{t=2}^{T} ||\theta_{t}-\hat{\theta_{t}}||
\end{equation}

\noindent Plugging this estimate into Equation \ref{eqn:initGradEst} and some minor algebra completes the proof.

\subsection{Proof of Theorem \ref{thm:ExpertLearning}} \label{sec:ExpertProof}
The basic idea of the proof is to follow the basic regret calculation for the expert learning algorithm, and then use the result of Theorem \ref{thm:Main}. The expert learning calculation is routine and adapted from \cite{cesa2006prediction} for the sake of completeness. Assume that the models are indexed from 1 to $N$. Then for model $i,$, define $L_{t}^{i} = \sum_{k=1}^{t}f_{k}(x_{k}^{i})$ and

$$W_{t} = \sum_{i=1}^{N} w_{i,0}e^{-\gamma L_{t}^{i}}$$

From this, it follows that properties of the logarithm that

\begin{equation} \label{eqn:logLow}
\ln W_{T} \geq  \left( \ln \max_{1 \leq i \leq N}w_{i,0}e^{-\gamma L_{T}^{i}} \right) = -\gamma \left(\min_{1 \leq i \leq N} \left(L_{T}^{i} + \frac{1}{\gamma} \ln \frac{1}{w_{i,0}} \right) \right) =-\gamma \left(\min_{1 \leq i \leq N} \left(L_{T}^{i} + \frac{1}{\gamma} \ln N \right) \right)
\end{equation}

By logarithm properties, we also have that $\ln W_{T} = \ln W_{1} + \sum_{i=1}^{T-1} \ln \left(\frac{W_T}{W_{T-1}} \right).$ It is not hard to see by the definition of $w_{i,t},$ we have

\begin{equation} \label{eqn:logSimp}
\ln \left(\frac{W_{t}}{W_{t-1}} \right) = \ln \left( \sum_{i=1}^{N}w_{i,t} e^{-\gamma f_{t}(x_{t}^{i})} \right)
\end{equation}

Note that the sum on the left is an expectation of the random variable $e^{-\gamma f_{t}(x_{t}^{i})},$ so Hoeffding's and Jensen's inequalities gives

\begin{equation} \label{eqn:HoeffJen}
\ln \left(\frac{W_{t}}{W_{t-1}} \right) \leq - \gamma \left(\sum_{i=1}^{N}w_{i,t} f_{t}(x_{t}^{i}) \right) + \frac{\gamma^{2}D^{2}}{8} \leq -\gamma f_{t} \left(\sum_{i=1}^{N} w_{i,t}x_{t}^{i} \right)+ \frac{\gamma^{2}D^{2}}{8} \leq -\gamma f(x_{t}) + \frac{\gamma^{2}D^{2}}{8}
\end{equation}

Combining Equations \ref{eqn:logLow}, \ref{eqn:logSimp}, and \ref{eqn:HoeffJen}, we have

\begin{equation} \label{eqn:wtBnd}
-\gamma \left(\min_{1 \leq i \leq N} \left(L_{T}^{i} + \frac{1}{\gamma} \ln N \right) \right) \leq \ln W_{T} \leq -\gamma \sum_{t=1}^{T} f_{t}(x_{t}) + \frac{\gamma^{2} TD^{2}}{8}
\end{equation}

Cancelling a $\gamma$ on both sides and rearranging Equation \ref{eqn:wtBnd} gives

\begin{equation} \label{eqn:ExpBoundPre1}
\sum_{t=1}^{T} f_{t}(x_{t}) - \min_{i} \left(\sum_{t=1}^{T} f_{t}(x_{t}^{i}) \right) \leq \frac{T \gamma D^{2}}{8} -\frac{1}{\gamma} \ln N
\end{equation}

Routine calculus minimizes the right hand side of Equation \ref{eqn:ExpBoundPre1} by setting $\gamma = \sqrt{8/(TD^{2}}$ to get

\begin{equation} \label{eqn:ExpBound1}
\sum_{t=1}^{T} f_{t}(x_{t}) - \left(\sum_{t=1}^{T} f_{t}(x_{t}^{i}) \right) \leq \frac{D \sqrt{2T}}{4}(1+ \ln N)
\end{equation}

for every $i.$ Since each model follows its own version of predictive online gradient descent, each model has its own bound according to Theorem \ref{thm:Main}, namely:

\begin{equation} \label{eqn:ExpBound2}
\sum_{t=1}^{T} f_{t}(x_{t}^{i}) - \left(\sum_{t=1}^{T} f_{t}(x_{t}^{\ast}) \right) \leq \frac{G||x_{1}-x_{1}^{\ast}||}{1-C_{\eta,\lambda}} + \frac{GC_{\eta,\lambda}}{1-C_{\eta,\lambda}}\mathcal{P}^{\ast} 
+ \frac{G\eta C_{\theta}}{1-C_{\eta,\lambda}} \min_{i} P^{\theta}_{i}
\end{equation}

Combining Equations \ref{eqn:ExpBound1} and \ref{eqn:ExpBound2} for the $i$ with the smallest bound for Equation \ref{eqn:ExpBound2} completes the proof.
\end{appendix}
\end{document}